\providecommand{\keywords}[1]{\textbf{{Keywords }} #1}
\providecommand{\subjclass}[1]{\textbf{{Mathematics subject classification }} #1}
\newtheorem{mytheorem}{Theorem}
\newtheorem{remark}{Remark}
\newtheorem{corollary}{Corollary}
\newcommand{\vect}[1]{\boldsymbol{#1}}
\newcommand{\loglog}{\mathop{\rm loglog}}
\numberwithin{equation}{section}
\algrenewcommand\algorithmicindent{1.0em}%
\title{Multiscale High-Dimensional Sparse Fourier Algorithms \\for Noisy Data}
\author{Bosu Choi}
\author{%
Bosu Choi\thanks{The Oden Institute for Computational Engineering and Sciences, University of Texas at Austin \texttt{choibosu@utexas.edu}}
\and
Andrew Christlieb  \thanks{Department of Computational Mathematics, Science, and Engineering
(CMSE), Michigan State University, \texttt{christli@msu.edu}}%
\and
Yang Wang\thanks{Department of Mathematics, The Hong Kong University of Science and Technology, \texttt{yangwang@ust.hk‎} }
}
\date{}
\begin{document}

\maketitle

\begin{abstract}
We develop an efficient and robust high-dimensional sparse Fourier algorithm for noisy samples. Earlier in the paper {\em Multi-dimensional sublinear sparse Fourier algorithm} (2016) \cite{2016arXiv160607407C}, an efficient sparse Fourier algorithm with $\Theta(ds \log s)$ average-case runtime and $\Theta(ds)$ sampling complexity under certain assumptions was developed for signals that are $s$-sparse and bandlimited in the $d$-dimensional Fourier domain, i.e. there are at most $s$ energetic frequencies and they are in $ \left[-N/2, N/2\right)^d\cap \mathbb{Z}^d$. However, in practice the measurements of signals often contain noise, and in some cases may only be nearly sparse in the sense that they are well approximated by the best $s$ Fourier modes. In this paper,  we propose a multiscale sparse Fourier algorithm for noisy samples that proves to be both robust against noise and efficient. 
\end{abstract}

\keywords{Higher dimensional sparse FFT $\cdot$ Fast Fourier algorithms $\cdot$ Fourier analysis $\cdot$ Multiscale algorithms}

\subjclass{65T50 $\cdot$ 68W25}

\section{Introduction} \label{sec:intro}
\setcounter{equation}{0}

Sparsity or compressibility in large data sets appears in many applications. Efficient algorithms taking advantage of these properties have been developed in different areas \cite{allard2012multi, candes2005decoding, coifman2006diffusion, iwen2017robust} in order to reduce sampling and/or runtime complexities. Compressive sensing \cite{donoho2006compressed, foucart2013mathematical} demonstrates that an appropriate small number of samples are sufficient to solve under-determined linear systems under certain well known conditions. More precisely, assuming $\vect{x}\in \mathbb{C}^L$ is $s$-sparse then $A \vect{x}=\vect{y}$ where $A\in \mathbb{C}^{I\times L}$,  $\vect{y}\in \mathbb{C}^I$ and $I<L$ can be solved under appropriate conditions on $A$ and $I$. Compressive sensing has led to an explosion in the study of sparsity and algorithms that can take advantage of sparsity to drastically reduce both runtime and sample complexities. One example of the algorithms in this type is the sparse Fourier transform, which finds the energetic Fourier modes for a signal that is $s$-sparse in the Fourier domain quickly with reduced number of samples. %

Several algorithms with varying approaches have been proposed to achieve {\em sublinear} runtime and sampling complexity for sparse Fourier transform, for both one-dimensional and higher-dimensional settings.  
In the one-dimension, the first sparse Fourier transform was a randomized algorithm introduced in \cite{gilbert2002near} having $\mathcal{O}(s^2\log^c N)$ runtime and $\mathcal{O}(s^2\log^c N)$ sampling complexity with a small positive constant $c$. The constant $c$ controls the balance between accuracy and efficiency. In the follow up work \cite{gilbert2005improved} both runtime and sampling complexity were improved to $\mathcal{O}(s\log^c N)$. The randomized algorithms in \cite{hassanieh2012nearly, hassanieh2012simple} have $\mathcal{O}(s\log N\log N/s )$ average-case runtime complexity. The first deterministic algorithm is introduced in \cite{iwen2010combinatorial}, which uses techniques from number theory and combinatorics and has $\mathcal{O}(s^2\log^4N)$ runtime and sampling complexity. In \cite{iwen2013improved}, an improved deterministic algorithm was introduced, and the extension to higher dimensional function was suggested but it suffered the exponential dependence of runtime complexity on the dimension. Another deterministic algorithm was introduced in \cite{lawlor2013adaptive} which uses the similar idea in the frequency recovery through phaseshift and works for noiseless samples from exactly $s$-sparse functions. It has $\Theta(s\log s)$ average-case runtime and $\Theta(s)$ sampling complexity. In \cite{christlieb2016multiscale} the method from \cite{lawlor2013adaptive} was extended to incorporate a multiscale technique that works robustly with noisy samples and has $\Theta(s\log s\log N/s)$ average-case runtime and $\Theta(s\log N/s)$ sampling complexity. 

Extension of one-dimensional spare Fourier transform to the multi-dimensional problem setting is in general not  straightforward. One simple way of extension is to unwrap the multi-dimensional signal to one-dimensional signal. However, this method suffers from the exponentially large runtime complexity due to the curse of dimensionality \cite{iwen2013improved}. The first randomized algorithm for two-dimensional problem was introduced in \cite{ghazi2013sample} through the use of parallel projections of frequencies. It has $\mathcal{O}(s)$ sampling and $\mathcal{O}(s\log s)$ runtime complexity on average for the exactly sparse signals, and $\mathcal{O}(s\log N)$ sampling and $\mathcal{O}(s\log^2 N)$ runtime complexity on average for the approximately sparse signals. In \cite{potts2016sparse} a general $d$-dimensional sparse Fourier algorithm was developed to achieve $\mathcal{O}(ds^2N)$ samples and $\mathcal{O}(ds^3+ds^2N\log(sN))$ runtime complexity. The algorithm uses rank-1 lattices and it finds energetic frequencies in a dimension-incremental fashion. While it is a deterministic algorithm it can also be modified into a randomized algorithm with $\mathcal{O}(ds+dN)$ sample complexity and $\mathcal{O}(ds^3)$ runtime complexity. A randomized algorithm introduced in \cite{kapralov2016sparse} requires $2^{\mathcal{O}(d^2)}(s\log N^d \loglog N^d)$ samples and $2^{\mathcal{O}(d^2)}s\log^{d+3}N^d$ runtime. In \cite{morotti2016explicit}, two deterministic sampling sets $\mathcal{O}(s^2d^2N)$ and $\mathcal{O}((s^2d^3N\log N)$ are constructed and the corresponding algorithms have $\mathcal{O}(s^2d^2N^d)$ and $\mathcal{O}(s^2d^3N^2\log N)$ runtime complexities respectively under the assumption that $N$ is a prime number. In \cite{2016arXiv160607407C} we have developed an algorithm combining phaseshift from \cite{iwen2013improved} and various transformations including parallel projection was given with average-case runtime complexity of $\mathcal{O}(sd\log s)$ and sampling complexity $\mathcal{O}(sd)$ under certain assumptions. 

In this paper, we introduce a multi-dimensional sparse Fourier algorithm for noisy samples. Our algorithm uses the techniques from our algorithm for noiseless samples in \cite{2016arXiv160607407C} and the  multiscale technique from \cite{christlieb2016multiscale} to overcome some of the challenges. Let $f:[0,1)^d\rightarrow \mathbb{C}$ be defined by $f(\vect{x}) =\sum_{j=1}^{s} a_j e^{2\pi i \vect{w}_j \cdot \vect{x}} + n(\vect{x})$ where $\vect{w}_j \in [-N/2,N/2)^d \cap \mathbb{Z}^d$, $s\ll N^d$ and $n(\vect{x})$ represents noise. The algorithms from \cite{2016arXiv160607407C} are not robust to noise since  it needs to compute the ratio of discrete Fourier transforms of samples from $f$ at shifted and unshifted points, which is not robust when the shift is small or the noise level $\sigma$ is high. To overcome this we adopt a multiscale approach similar to the one introduced in \cite{christlieb2016multiscale}. This allows us to progressively approximate the significant modes while controlling the influence of noise. In higher dimensions there are some additional challenges we are able to overcome. Details will be discussed later in the paper. 

Our algorithm assumes that we have access to an underlying continuous function $f$, i.e., we can sample at anywhere we want. However, samples are sometimes given at the beginning and getting extra samples can be very expensive. Accordingly, it is necessary to do approximation of extra samples using given samples. A fully discrete sparse Fourier transform was introduced in \cite{merhi2017new} combining periodized Gaussian filters and one-dimensional sparse Fourier transform in continuous setting such as \cite{iwen2013improved} and \cite{christlieb2016multiscale}. For future work we are hopeful that similar approaches can help us to adopt the multiscale high-dimensional algorithm to the case of fully discrete samples.  

The rest of this paper is organized as follows. In Section \ref{sec:pre} we introduce our problem setting, necessary notation and our noise model, and review briefly about high-dimensional sparse Fourier algorithm from \cite{2016arXiv160607407C}. Section \ref{sec:multiscale} introduces a multiscale method for the high-dimensional sparse Fourier algorithm from our previous work. This modifies the algorithm to be able to recover noisy signals.   In Section \ref{sec:alg}, parameters that determine the performance of our algorithm are introduced and the pseudocode is given with the description and analysis. The results of the numerical experiments are shown in Section \ref{sec:eval} and the conclusion is in Section \ref{sec:con}.

\section{Preliminaries} \label{sec:pre}
\setcounter{equation}{0}

\subsection{Notation and Review} \label{sec:notation} 

In this section, we introduce the notation used throughout the rest of this paper and the brief review of high-dimensional sparse Fourier algorithm for samples without noise in \cite{2016arXiv160607407C}. Let $s$, $d$ and $N$ be natural numbers, $s\ll N^d$, and $D:=[0,1)^d$. We consider a function $f:D\rightarrow \mathbb{C}$ which is $s$-sparse in the $d$-dimensional Fourier domain as follows
\begin{equation*}
f(\vect{x})=\sum_{j=1}^{s} a_j e^{2\pi i \vect{w}_j \cdot \vect{x}}
\end{equation*}
where each $\vect{w}_j \in [-N/2, N/2)^d \cap \mathbb{Z}^d$ and $a_j\in \mathbb{C}$. We note that $f$ can be regarded as a periodic function defined on $\mathbb{R}^d$. The aim of sparse Fourier algorithms is to rapidly reconstruct a function $f$ using small number of its samples. In \cite{2016arXiv160607407C}, the methods were introduced using several different transformations and parallel projections along coordinate axes of frequencies in order to exploit the one-dimensional sparse Fourier algorithm from \cite{lawlor2013adaptive}. These transformations such as partial unwrapping and tilting methods are introduced in order to change the locations of energetic frequencies when the current energetic frequencies are hard or impossible to find through the parallel projections directly. Through those manipulations, each frequency vector $\vect{w}_j$ is recovered in an entry-wise fashion. In this way, the linear dependence of runtime and sampling complexities on the dimension $d$ could be shown empirically, which is a great improvement when compared to the $d$-dimensional FFT with the exponential dependence on $d$. The transformations and projections occur in
the physical domain, which provides the separation of the frequency vectors in the Fourier domain. That is, each $\vect{w}_j$ is transformed to $\vect{w}_j'$ and then projected onto several lines, and these can be done by manipulating the sampling points in the physical domain $D$. Let $g:D' \rightarrow D$ represent those transformations where $D'$ is $d$ or less dimensional space and is determined by each transformation. We assume that $D'$ has $d'$ dimensions. A new function $h:D'\rightarrow \mathbb{C}$ is defined as a composition of $f$ and $g$, i.e.,   
\begin{equation*}
h(\vect{t}):=f(g(\vect{t})).
\end{equation*}
We note that $h$ is still $s$-sparse in the $d'$-dimensional Fourier domain, $[-N'/2, N'/2)^{d'}\cap \mathbb{Z}^{d'}$, whose bandwidth $N'$ depends on each transformation. 
For example, consider a 4-dimensional function $f$ with the Fourier domain, $[-N/2, N/2)^4\cap \mathbb{Z}^4$. If a partial unwrapping is applied to $f$ which unwraps each $\vect{w}_j=(w_{j,1}, w_{j,2}, w_{j,3}, w_{j,4})$ to $\vect{w}_j'=(w_{j,1}+N w_{j,2}, w_{j,3}+N w_{j,4})$ then it implies that $g:(t_1,t_2)\rightarrow (t_1,Nt_1,t_2,Nt_2)$ and $h$ has the Fourier domain, $[-N^2/2, N^2/2)^2\cap \mathbb{Z}^2$ where accordingly $N'=N^2$ and $d'=2$. This is one example and there are variations of partial unwrapping methods and tilting methods which can be found in \cite{2016arXiv160607407C}.
Using samples of $h$(or $f$), the transformed frequency vectors $\vect{w}_j'$ and corresponding Fourier coefficients $a_j$ are found through the parallel projection method and $\vect{w}_j'$ are transformed back to $\vect{w}_j$. Now, we introduce how $\vect{w}_j'=(w_{j,1}, w_{j,2}, \cdots, w_{j,d'})$ can be recovered element-wisely using the parallel projection method and the ideas from one-dimensional sparse Fourier algorithm. Let $p$ be a prime number greater than a constant multiple of $s$, i.e., $p>cs$ for some constant $c$, $j$ be a fixed integer among $\{1,2,\cdots,s\}$, $k$ be fixed among the set $\{1,2,\cdots d'\}$ and $\vect{e}_k$ be a vector with all zero entries but $1$ at the index $k$. Furthermore, $\epsilon$ is defined as a positive number $\leq 1/N'$. To recover $k$-th element of each $\vect{w}_j'$, we use two sets of $p$-length equispaced samples $\mathbf{h}_{p}^{\widetilde{k}}$ and $\mathbf{h}_{p;\epsilon}^{\widetilde{k};k}$ as follows,
\begin{equation*}
\mathbf{h}_{p}^{\widetilde{k}}[\ell]:=h\left( \frac{\ell}{p} \vect{e}_{\widetilde{k}} \right) \quad \text{and} \quad
\mathbf{h}_{p;\epsilon}^{\widetilde{k};k}[\ell]:=h\left( \frac{\ell}{p} \vect{e}_{\widetilde{k}} + \epsilon \vect{e}_k\right)
\end{equation*} 
where $\ell=0,1,\cdots,p-1$ and $\widetilde{k}\in\{1,2, \cdots, d'\}$ is the index of coordinate axis where a particular $\vect{w}'_{\bar{j}}$ has $\widetilde{k}$-th element, $w'_{\bar{j},\widetilde{k}}$,  different from the $\widetilde{k}$-th elements of any other energetic frequency vectors. In this case, we refer to the above phenomenon as ``no collision from projection". At the same time, if there is no {\em collision modulo} $p$, i.e., $w'_{\bar{j},\widetilde{k}}$ has the unique remainder modulo $p$ from others then the discrete Fourier transform of each sample set is
\begin{align}
\mathcal{F}\left(\mathbf{h}_{p}^{\widetilde{k}}\right)[m]&=p \sum_{w'_{j,k}\equiv m \bmod {p}} a_j=pa_{\bar{j}} ,\qquad \text{and} \nonumber\\
\mathcal{F}\left(\mathbf{h}_{p;\epsilon}^{\widetilde{k};k}\right)[m]&=p \sum_{w'_{j,k}\equiv m \bmod {p}} a_j e^{2\pi i w'_{j,k} \epsilon}=pa_{\bar{j}} e^{2\pi i w'_{\bar{j},k} \epsilon},
\label{eqn:shift}
\end{align}
respectively. The equations above in (\ref{eqn:shift}) give a unique entry for the $k^{\rm th}$ element assuming there does not exist a collision modulo $p$ of the vetor projected onto the $k^{\rm th}$ axis. Hence, in the above equations, when the second equalities hold, we can recover  $a_{\bar{j}}$ and $w'_{\bar{j},k}$ as follows,
\begin{equation}
w'_{\bar{j},k}=\frac{1}{2 \pi \epsilon}{\rm Arg}\left( \frac{\mathcal{F}\left(\mathbf{h}_{p;\epsilon}^{\widetilde{k};k}\right)[m]}{\mathcal{F}\left(\mathbf{h}_{p}^{\widetilde{k}}\right)[m]} \right) \qquad \text{and} \qquad a_{\bar{j}}=\frac{\mathcal{F}\left(\mathbf{h}_{p}^{\widetilde{k}}\right)[m]}{p},
\label{eqn:noiselessFreqCoeff}
\end{equation}
where the function ${\rm Arg}(z)$ is defined to be the argument of $z$ in the branch $[-\pi,\pi)$. The right choice of the branch and the shift size $\epsilon\leq \frac{1}{N'}$ make it possible to find the correct $w'_{\bar{j},k}$. Algorithmically, the two kinds of collisions are guaranteed not too happen using the following test:
\begin{equation}
\left|\frac{ \mathcal{F} \left(\mathbf{h}_{p;\epsilon}^{\widetilde{k};k}\right)[m]}{\mathcal{F}\left(\mathbf{h}_{p}^{\widetilde{k}}\right)[m]} \right|=1,
\label{eqn:tests}
\end{equation}
which is inspired by the test used in \cite{lawlor2013adaptive}. Practically, we put some threshold $\tau>0$ so that if the difference between that the left and right-hand sides is less than $\tau$, then we conclude that there are no collisions of both kinds. In this case, each $w_{\bar{j},k}$ for $k=1,2,\cdots,d'$ can be recovered using a pair of sets $\mathbf{h}_{p}^{\widetilde{k}}$ and $\mathbf{h}_{p;\epsilon}^{\widetilde{k};k}$, respectively. Otherwise, we take another prime number for sample length, switch the index of coordinate axis for the projection, update the samples by eliminating the influence from previously found fourier modes and repeat our procedure as before. Switching the coordinate axis and updating samples reduce the occasions of {\em collision from projection}. In \cite{lawlor2013adaptive}, moreover, it is proved that the probability is very low when $d'$ is large that all remaining frequency vectors have {\em collisions from projection} onto all coordinate axes, which we call the ``{\em worst case scenario}''. In the ``{\em worst case scenario}'', the parallel projections we just used do not work and thus, we need a rotation mapping, $g'$, defining another function $h'=f \circ g'$.

\subsection{Noise Model} \label{noise}

In this section, we introduce a model system which contains noise. We use this model system to quantify the behavior of the algorithm in the presence of noise.
The algorithm we introduced in Section \ref{sec:notation} works well when $h$ (or $f$) is exactly $s$-sparse and the samples from $h$ (or $f$) are not noisy. The high-dimenional sparse Fourier transform, described in the previous section, is not robust to noise since in order to find entries of energetic frequency vectors we compute the fraction ${ \mathcal{F} \left(\mathbf{h}_{p;\epsilon}^{\widetilde{k};k}\right)[m]}\big/{\mathcal{F}\left(\mathbf{h}_{p}^{\widetilde{k}}\right)[m]}$ which is sensitive to noise. The model we consider here is, 
\begin{equation*}
\mathbf{r}_{p}^{\widetilde{k}}[\ell]:=\mathbf{h}_{p}^{\widetilde{k}}[\ell]+n_{\ell}=h\left(\frac{\ell}{p}\vect{e}_{\widetilde{k}}\right)+n_{\ell}
\end{equation*}
where $h$ is the tranformed $f$ defined in the previous section,  $\ell=0,1,\cdots, p-1$, and $\vect{n}:=(n_0, n_1, \cdots, n_{p-1})$ is a complex Gaussian random variable with mean $\vect{0}$ and variance $\sigma^2I$. 
If we apply DFT to this sample set, we get
\begin{equation}
\mathcal{F}\left(\mathbf{r}_{p}^{\widetilde{k}}\right)[m]=\mathcal{F} \left( \mathbf{h}_{p}^{\widetilde{k}}\right) [m]+\sum_{\ell=0}^{p-1} n_{\ell}e^{-2\pi i m \frac{\ell}{p} }
\label{eqn:dftUnshifted}
\end{equation}
Since $n_{\ell}$ are i.i.d Gaussian variables, the expectation and variance of the second term in (\ref{eqn:dftUnshifted}) are
\begin{equation*}
\mathbb{E}\left[ \sum_{\ell=0}^{p-1} n_{\ell}e^{-2\pi i m \frac{\ell}{p} } \right] = 0 
\end{equation*}
and
 \begin{equation*}
\text{Var}\left[ \sum_{\ell=0}^{p-1} n_{\ell}e^{-2\pi i m \frac{\ell}{p} } \right] = p\sigma^2,
\end{equation*}
respectively. Accordingly,
\begin{equation*}
\mathbb{E}\left[ \mathcal{F}\left(\mathbf{r}_{p}^{\widetilde{k}}\right)[m] \right] =  \mathcal{F}\left(\mathbf{h}_{p}^{\widetilde{k}}\right)[m]  
\end{equation*}
and
\begin{equation*}
\text{Var}\left[\mathcal{F}\left(\mathbf{r}_{p}^{\widetilde{k}}\right)[m] \right] = p\sigma^2.
\end{equation*}
For noisy shifted sample set $\mathbf{r}_{p;\epsilon}^{\widetilde{k};k}:=\mathbf{h}_{p;\epsilon}^{\widetilde{k};k}+\widetilde{\vect{n}}$ with an i.i.d Gaussian random vector $\widetilde{\vect{n}}$, we have likewise
\begin{equation*}
\mathbb{E}\left[ \mathcal{F}\left(\mathbf{r}_{p;\epsilon}^{\widetilde{k};k}\right)[m] \right] =  \mathcal{F}\left(\mathbf{h}_{p;\epsilon}^{\widetilde{k};k}\right)[m]  
\end{equation*}
and
\begin{equation*}
\text{Var}\left[\mathcal{F}\left(\mathbf{r}_{p;\epsilon}^{\widetilde{k};k}\right)[m] \right] = p\sigma^2.
\end{equation*}
In the case of $w'_{\bar{j},\widetilde{k}}$ not having collisions both from projection and modulo $p$, and $w'_{\bar{j},\widetilde{k}}\equiv m ~(\bmod ~p)$,
\begin{equation}  
\mathcal{F}\left(\mathbf{r}_{p}^{\widetilde{k}}\right)[m]=pa_{\bar{j}}+ \mathcal{O}(\sigma\sqrt{p})
\label{eqn:expectUnshift}
\end{equation}
and
 \begin{equation*} 
 \mathcal{F}\left(\mathbf{r}_{p;\epsilon}^{\widetilde{k};k}\right)[m]=pa_{\bar{j}}e^{2\pi i w'_{\bar{j},k} \epsilon}+ \mathcal{O}(\sigma\sqrt{p}) 
\end{equation*}
for each $k=1,2,\cdots,d'$. As a result, we get
\begin{equation}
\frac{\mathcal{F}\left(\mathbf{r}_{p;\epsilon}^{\widetilde{k};k}\right)[m]}{\mathcal{F}\left(\mathbf{r}_{p}^{\widetilde{k}}\right)[m]}
=e^{2\pi i w'_{\bar{j},k} \epsilon} + \mathcal{O} \left(\frac{\sigma}{a_{\bar{j}}\sqrt{p}}\right)
\label{eqn:noisy}
\end{equation}
and note that if there were no noise in samples, we only have the first term on the right side of (\ref{eqn:noisy}) which makes it possible to recover $w'_{\bar{j},k}$ by taking its argument and dividing it by $2\pi \epsilon$ as (\ref{eqn:noiselessFreqCoeff}). With noisy samples, however, it is corrupted with noise which is a multiple of $\frac{\sigma}{a_{\bar{j}}\sqrt{p}}$. Defining
\begin{equation}
\hat{w}_{\bar{j},k} :=\frac{1}{2\pi \epsilon} {\rm Arg}\left( \frac{\mathcal{F}\left(\mathbf{r}_{p;\epsilon}^{\widetilde{k};k}\right)[m]}{\mathcal{F}\left(\mathbf{r}_{p}^{\widetilde{k}}\right)[m]} \right),
\label{def:noisyFreq}
\end{equation}
we want to see how far $\hat{w}_{\bar{j},k}$ is from $w'_{\bar{j},k}$. For this purpose, we introduce the {\em Lee norm} associated with a lattice $\mathcal{L}$ in $\mathbb{R}$ as $\|z\|_{\mathcal{L}}:=\min_{y\in \mathcal{L}}|z-y|$ for $z\in \mathbb{R}$ and the related property that under the Lee norm associated with the lattice $2\pi \mathbb{Z}$,
\begin{equation}
\|{\rm Arg}(\gamma + \nu) - {\rm Arg}(\gamma) \|_{2\pi \mathbb{Z}} = \left\|{\rm Arg} \left(1 + \frac{\nu}{\gamma}\right)  \right\|_{2\pi \mathbb{Z}} \leq \frac{\pi}{2} \left|\frac{\nu}{\gamma} \right|,
\label{eqn:leenorm}
\end{equation}
where $|\gamma|\geq|\nu|$ with $\gamma, \nu \in \mathbb{C}$.
By choosing the sample length $p$ large enough depending on the least magnitude nonzero $a_{min}$ and the noise level $\sigma$, (\ref{eqn:leenorm}) can be applied to (\ref{eqn:noisy}) as follows,
\begin{equation*}
\left \|{\rm Arg} \left(\frac{\mathcal{F}\left(\mathbf{r}_{p;\epsilon}^{\widetilde{k};k}\right)[m]}{\mathcal{F}\left(\mathbf{r}_{p}^{\widetilde{k}}\right)[m]}\right) - 2\pi w'_{\bar{j},k} \epsilon \right\|_{2\pi \mathbb{Z}} \leq \mathcal{O}\left(\frac{\sigma}{|a_{\min}|\sqrt{p}}\right).
\end{equation*}
Consequently, 
\begin{equation}
\left\|\hat{w}_{\bar{j},k}  - w'_{\bar{j},k}\right\|_{ \mathbb{Z}} \leq \mathcal{O}\left(\frac{\sigma}{2\pi \epsilon~ |a_{\min}| \sqrt{p}}\right),
\label{eqn:dist}
\end{equation}
which implies that the error of our estimate $\hat{w}_{\bar{j},k}$ to $w'_{\bar{j},k}$ is controlled by the size of $\frac{\sigma}{\epsilon~ |a_{\min}| \sqrt{p}}$. Thus, $p$ needs to be chosen carefully depending on $\frac{\sigma}{\epsilon~ |a_{\min}|}$. On the other hand, from (\ref{eqn:expectUnshift}), we can approximate the corresponding coefficient $a_{\bar{j}}$ with the error of size $\mathcal{O}(\sigma/\sqrt{p})$ as follows
\begin{equation}
a_{\bar{j}} = \frac{1}{p}\mathcal{F}\left(\mathbf{r}_{p}^{\widetilde{k}}\right)[m] + \mathcal{O}\left(\frac{\sigma}{\sqrt{p}}\right).
\label{eqn:coeffEst}
\end{equation}

\section{Multiscale Method} \label{sec:multiscale}
\setcounter{equation}{0}

Here we introduce the multiscale approach for recovering frequencies in the noisy setting. The method was introduced in \cite{christlieb2016multiscale}. The method is part of the overall sublinear algorithm described in Section \ref{sec:alg}. The basic idea of the algorithm is to find the most significant bits which are the least susceptible to noise. Then the algorithm subtracts the leading bits and shifts the remaining bits to the most significant digits. As we describe here in Section \ref{sec:multiscale}, this will decrease the impact of the noise by the use of larger shifting size $\epsilon_{\alpha}$ recovering the next most significant bits. This is repeated to recover the entire entries of the frequency vector.

In \cite{christlieb2016multiscale}, {\em rounding} and {\em multiscale} methods for the one-dimensional sparse Fourier algorithm for noisy data were introduced. Both methods use the fact that the peaks of DFT are robust to relatively high noise, i.e., $\ell$ can be correctly found such that $w \equiv \ell (\bmod ~p)$ for an energetic frequency $w$. The {\em rounding} method is efficient when $\sigma$ is relatively small, which approximates such $w:=bp+\ell$ for some $b$ up to $p/2$ error and rounds a  multiple of $p$ in order to get the correct $b$. It was shown that $p\geq \max\{c_1s,c_2(\frac{\sigma}{\epsilon a_{\min}})^{2/3}\}$ for some constants $c_1$ and $c_2$ makes it possible to correctly find $w$ through the {\em rounding} method. On the other hand, the {\em multiscale} method was introduced for relatively large $\sigma$. It prevents $p$ from becoming too large, which happens for large $\sigma$ in {\em the rounding method}. In this section, we focus on extending the {\em multiscale} method to recover high dimensional frequencies by gradually fixing each entry estimation with several shifts $\epsilon_{\alpha}$. 

\subsection{Description of Frequency Entry Estimation}

In this section, we give an overview of how {\em the multiscale method} works.
Let ${j}\in \{1,2,\cdots, s\}$ and  $k \in \{1,2,\cdots, d'\}$ be fixed. The target frequency entry $w'_{j,k}$ is assumed not to have collisions both from projection and modulo $p$.
We start with a coarse estimation $w^0_{j,k}$ of $w'_{j,k}$ defined by
\begin{equation*}
w^0_{j,k}:= \frac{1}{2\pi \epsilon_0} {\rm Arg} \left( \frac{\mathcal{F}\left(\mathbf{r}_{p;\epsilon_0}^{\widetilde{k};k}\right)[m]}{\mathcal{F}\left(\mathbf{r}_{p}^{\widetilde{k}}\right)[m]} \right),
\end{equation*}
where $\epsilon_0 \leq 1/N'$. Then $w^0_{j,k} \equiv w'_{j,k} (\bmod ~p)$ even though it is not guaranteed that $w^0_{j,k}=w'_{j,k}$. Thus, we need to improve the approximation. With each correction, the solution is improved by $l$ digits where $l$ depends on the parameters that are chosen in the method as well as noise. Each correction term is calculated with a choice of growing $\epsilon_{\alpha}>1/N'$, i.e., $\epsilon_{\alpha-1}<\epsilon_{\alpha}$ for all $\alpha\geq 1$. With the initialization $b_{k,0}:=\epsilon_0 w^0_{j,k}$, the correction terms are calculated in the following way,
\begin{equation*}
b_{k,\alpha}:= \frac{1}{2\pi}{\rm Arg}\left( \frac{\mathcal{F}\left(\mathbf{r}_{p;\epsilon_{\alpha}}^{\widetilde{k};k}\right)[m]}{\mathcal{F}\left(\mathbf{r}_{p}^{\widetilde{k}}\right)[m]} \right)  
\end{equation*}
and
\begin{equation*}
w^{\alpha}_{j,k}:=w^{\alpha-1}_{j,k}+\frac{\left( b_{k,\alpha}-\epsilon_{\alpha} w^{\alpha-1}_{j,k} \right)\left(\bmod ~\left[-\frac{1}{2},\frac{1}{2}~\right)\right)}{\epsilon_{\alpha}}
\end{equation*}
for $\alpha \geq 1$, where $x~(\bmod~ [-1/2,1/2))$ is defined to be the value $a\in[-1/2, 1/2)$ such that $x\equiv a~(\bmod~ 1)$.
Using the fact,
\begin{equation*}
b_{k,\alpha} \approx \epsilon_{\alpha} w'_{j,k} \left(\bmod~\left[-\frac{1}{2}, \frac{1}{2} \right)\right),
\end{equation*}
the error $w'_{j,k}-w^{\alpha-1}_{j,k}$ can be estimated as
\begin{align*}
\epsilon_{\alpha} (w'_{j,k}- w^{\alpha-1}_{j,k}) &= \epsilon_{\alpha} w'_{j,k} -\epsilon_{\alpha}w^{\alpha-1}_{j,k}\\
&\approx (b_{k,\alpha} -\epsilon_{\alpha} w^{\alpha-1}_{j,k}) \left(\bmod~\left[-\frac{1}{2}, \frac{1}{2} \right)\right)
\end{align*}
and thus, in a similar manner to (\ref{eqn:dist}),
\begin{align}
\mathcal{O}\left(\frac{\sigma}{\epsilon_{\alpha}a_{\min}\sqrt{p}}\right)&=( w'_{j,k}-w^{\alpha-1}_{j,k}) - \frac{\left( b_{k,\alpha}-\epsilon_{\alpha} w^{\alpha-1}_{j,k} \right)\left(\bmod ~\left[-\frac{1}{2},\frac{1}{2}~\right)\right)}{\epsilon_{\alpha}}\\
&= w'_{j,k} -\left(w^{\alpha-1}_{j,k} + \frac{\left( b_{k,\alpha}-\epsilon_{\alpha} w^{\alpha-1}_{j,k} \right)\left(\bmod ~\left[-\frac{1}{2},\frac{1}{2}~\right)\right)}{\epsilon_{\alpha}}\right)\\
&=w'_{j,k}-w^{\alpha}_{j,k}.
\label{eqn:errorEst}
\end{align}
As the correction is repeated with larger $\epsilon_{\alpha}$, the error of the estimate decreases. In other words, we approximate $w'_{j,k}$ by its most significant bits and the next significant bits repeatedly. The performance of this multiscale method is shown in detail in the next section. 

\subsection{Analysis of Multiscale Method}
 
 In this section, we establish the multiscale algorithm recovering a fixed number of additional bits of the frequency with each iteration, and we further establish the rate of reconstruction.
 The following theorem shows how the correction term $c_{k,\alpha}/\epsilon_{\alpha}$ is constructed in each iteration and how large the error of estimate $w^{\alpha}_{j,k}$ after $M$ iterations is in the multiscale frequency entry estimation procedure.

\begin{mytheorem}
Let $j \in \{1,2,\cdots,s\} , k \in \{1,2,\cdots,d'\}$ be fixed and $w'_{j,k} \in \left[ -\frac{N'}{2}, \frac{N'}{2} \right)$. Let $0<\epsilon_0<\epsilon_1<\cdots<\epsilon_M$ and $b_{k,0}, b_{k,1}, \cdots, b_{k,M} \in \mathbb{R}$ such that 
\begin{equation*}
\| \epsilon_{\alpha}w'_{j,k} -b_{k,\alpha} \|_{\mathbb{Z}}<\delta, \qquad 0\leq\alpha\leq M
\end{equation*}
where $0<\delta<\frac{1}{4}$. Assume that $\epsilon_0\leq \frac{1-2\delta}{N'}$ and $\beta_{\alpha}:=\frac{\epsilon_{\alpha}}{\epsilon_{\alpha-1}}\leq \frac{1-2\delta}{2\delta}$. Then there exist $c_{k,0}, c_{k,1}, \cdots, c_{k,M}\in \mathbb{R}$, each computable from $\{\epsilon_{\alpha}\}$ and $\{b_{k,\alpha}\}$ such that
\begin{equation*}
\left| \widetilde{w}_{j,k}-w'_{j,k} \right| \leq \frac{\delta}{\epsilon_0}\prod_{\alpha=1}^{M} \beta_{\alpha}^{-1}, \qquad
where \qquad \widetilde{w}_{j,k} := \sum_{\alpha=0}^{M} \frac{c_{k,\alpha}}{\epsilon_{\alpha}}.
\end{equation*}
\label{thm:freqEst}
\end{mytheorem}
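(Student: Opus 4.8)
The plan is to build the $c_{k,\alpha}$ recursively, mimicking the multiscale update already described above, and to track the reconstruction error by a telescoping/induction argument on $\alpha$. Concretely, I would set $c_{k,0} := b_{k,0}$ (so the partial sum after stage $0$ is $w^0 := b_{k,0}/\epsilon_0$), and for $\alpha \geq 1$ define
\[
c_{k,\alpha} := \epsilon_{\alpha}\!\left( \Bigl( \tfrac{b_{k,\alpha}}{\epsilon_{\alpha}} - w^{\alpha-1} \Bigr) \bmod \Bigl[-\tfrac12, \tfrac12\Bigr) \right), \qquad w^{\alpha} := w^{\alpha-1} + \frac{c_{k,\alpha}}{\epsilon_{\alpha}},
\]
so that $\widetilde w_{j,k} = w^M = \sum_{\alpha=0}^M c_{k,\alpha}/\epsilon_{\alpha}$, matching the definition in the statement. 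Each $c_{k,\alpha}$ is manifestly computable from $\{\epsilon_\beta\}_{\beta\le\alpha}$ and $\{b_{k,\beta}\}_{\beta\le\alpha}$, which discharges the computability claim.

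The core is the induction hypothesis $|w^{\alpha} - w'_{j,k}| \leq \delta/(\epsilon_0 \prod_{\beta=1}^{\alpha}\beta_\beta) = \delta/\epsilon_\alpha$ — i.e. after stage $\alpha$ the error is at most $\delta/\epsilon_\alpha$. The base case $\alpha=0$ follows from $\|\epsilon_0 w'_{j,k} - b_{k,0}\|_{\mathbb Z} < \delta$ together with $\epsilon_0 w'_{j,k} \in [-\epsilon_0 N'/2, \epsilon_0 N'/2) \subseteq [-(1-2\delta)/2, (1-2\delta)/2)$, which forces the nearest integer in the Lee norm to be $0$, hence $|\epsilon_0 w'_{j,k} - b_{k,0}| < \delta$ and $|w^0 - w'_{j,k}| < \delta/\epsilon_0$. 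For the inductive step, suppose $|w^{\alpha-1} - w'_{j,k}| \leq \delta/\epsilon_{\alpha-1}$. Then
\[
\Bigl|\epsilon_\alpha\bigl(w'_{j,k} - w^{\alpha-1}\bigr)\Bigr| \leq \frac{\epsilon_\alpha}{\epsilon_{\alpha-1}}\,\delta = \beta_\alpha \delta \leq \frac{1-2\delta}{2},
\]
using the hypothesis $\beta_\alpha \leq (1-2\delta)/(2\delta)$. On the other hand $\|\epsilon_\alpha w'_{j,k} - b_{k,\alpha}\|_{\mathbb Z} < \delta$, so $\epsilon_\alpha w'_{j,k} - b_{k,\alpha}$ lies within $\delta$ of some integer. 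Combining, $(b_{k,\alpha}/\epsilon_\alpha - w^{\alpha-1})\epsilon_\alpha$ lies within $\delta + (1-2\delta)/2 = 1/2 - \delta/2 < 1/2$ of an integer, so reducing it mod $[-1/2,1/2)$ picks out exactly the representative congruent to $\epsilon_\alpha(w'_{j,k} - w^{\alpha-1})$ shifted by that same integer — i.e. the mod operation commits no "wrap-around error." Thus $c_{k,\alpha} = \epsilon_\alpha(w'_{j,k} - w^{\alpha-1}) + (\text{integer}) - (\epsilon_\alpha w'_{j,k} - b_{k,\alpha} \text{ rounded})$, and unwinding gives $|\,\epsilon_\alpha(w'_{j,k} - w^{\alpha}) \,| = \|\epsilon_\alpha w'_{j,k} - b_{k,\alpha}\|_{\mathbb Z} < \delta$, hence $|w^{\alpha} - w'_{j,k}| < \delta/\epsilon_\alpha$. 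Taking $\alpha = M$ gives the claimed bound, since $\epsilon_M = \epsilon_0\prod_{\alpha=1}^M \beta_\alpha$.

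The delicate step — and the one I would write out most carefully — is verifying that the reduction mod $[-1/2,1/2)$ in the definition of $c_{k,\alpha}$ selects the \emph{intended} branch, i.e. that the quantity being reduced is genuinely within $1/2$ of the integer we want and not within $1/2$ of a \emph{different} integer. This is exactly where the two hypotheses $\epsilon_0 \leq (1-2\delta)/N'$ (to start the error small enough relative to the first scale) and $\beta_\alpha \leq (1-2\delta)/(2\delta)$ (to keep the error, when rescaled by $\epsilon_\alpha$, from exceeding $(1-2\delta)/2$) are used; the slack $1-2\delta$ versus the $\delta$ coming from $\|\epsilon_\alpha w'_{j,k} - b_{k,\alpha}\|_{\mathbb Z}<\delta$ must add up to strictly less than $1$, and $\delta < 1/4$ guarantees a genuine margin. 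I would phrase this as a short lemma: if $|y - n| < 1/2$ for a (unique) integer $n$ and $\|y - z\|_{\mathbb Z} < \rho$ with $|y-n| + \rho < 1/2$... — actually the cleanest formulation is that $z \bmod [-1/2,1/2)$ equals $z - n$ whenever $\|z - y\|_{\mathbb{Z}} < \delta$ and $|y| \le 1/2 - \delta$ forces... — and then apply it with $y = \epsilon_\alpha(w'_{j,k} - w^{\alpha-1})$, $z = b_{k,\alpha} - \epsilon_\alpha w^{\alpha-1}$. Everything else is bookkeeping with the telescoping sum and the product $\prod \beta_\alpha^{-1}$.
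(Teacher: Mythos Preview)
Your approach is correct and is exactly the one the paper invokes (it simply cites Theorem~4.2 of \cite{christlieb2016multiscale}): define the $c_{k,\alpha}$ recursively via the multiscale update of Section~3.1 and prove by induction that $|w^{\alpha}-w'_{j,k}|<\delta/\epsilon_{\alpha}$, using $\epsilon_0\le (1-2\delta)/N'$ for the base case and $\beta_{\alpha}\le (1-2\delta)/(2\delta)$ to ensure the mod picks the intended branch in the inductive step. One slip to fix in your write-up: your displayed definition applies the reduction $\bmod\,[-\tfrac12,\tfrac12)$ to $b_{k,\alpha}/\epsilon_{\alpha}-w^{\alpha-1}$ and then multiplies by $\epsilon_{\alpha}$, whereas your own analysis (and the paper) correctly reduces $b_{k,\alpha}-\epsilon_{\alpha}w^{\alpha-1}$ modulo $[-\tfrac12,\tfrac12)$; move the $\epsilon_{\alpha}$ inside before the reduction so the formula matches the argument.
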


\begin{proof}
The proof is the same as the proof of Theorem 4.2 in \cite{christlieb2016multiscale} since each entry of frequency vectors is corrected in the same way as each one-dimensional frequency $w$ in \cite{christlieb2016multiscale} is. Each $c_{k,\alpha}$ is defined as
\begin{align*}
c_{k,0}&:=b_{k,0}\\
c_{k,\alpha}&:=b_{k,\alpha}-\epsilon_{\alpha}\lambda_{k,\alpha-1}\left(\bmod \left[-\frac{1}{2}, \frac{1}{2}\right)\right),
\end{align*}
for $\alpha \geq 1$ where $\lambda_{k,\alpha}=c_{k,\alpha}/\epsilon_{\alpha}$ for $\alpha \geq 0$.
\end{proof}

\begin{corollary}
Assume that we let $\beta_{\alpha}=\beta$ in Theorem \ref{thm:freqEst} where $\beta\leq (1-2\delta)/(2\delta)$, i.e., $\epsilon_{\alpha}=\beta^{\alpha}\epsilon_0$ for all $\alpha \geq 1$. Let $p>0$ and $M \geq \left\lfloor \log_{\beta}\frac{2\delta}{\epsilon_0} \right\rfloor+1$. Then,
\begin{equation*}
\left| \widetilde{w}_{j,k}-w'_{j,k} \right| \leq \frac{\delta}{\epsilon_0}\beta^{-M}<\frac{1}{2}.
\end{equation*}
\label{cor:iterNum}
\end{corollary}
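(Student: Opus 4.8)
The plan is to derive this directly from Theorem~\ref{thm:freqEst} by specializing all the ratios $\beta_\alpha$ to a single constant $\beta$ and then estimating the resulting geometric product. First I would check that the hypotheses of Theorem~\ref{thm:freqEst} are met: the assumption $\epsilon_0<\epsilon_1<\cdots<\epsilon_M$ forces each ratio $\beta_\alpha=\epsilon_\alpha/\epsilon_{\alpha-1}$ to exceed $1$, and with $\epsilon_\alpha=\beta^\alpha\epsilon_0$ we have $\beta_\alpha=\beta>1$ for every $\alpha$; the standing hypothesis $\beta\le(1-2\delta)/(2\delta)$ is exactly the condition $\beta_\alpha\le(1-2\delta)/(2\delta)$ required in the theorem. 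Invoking it then yields
$|\widetilde{w}_{j,k}-w'_{j,k}|\le(\delta/\epsilon_0)\prod_{\alpha=1}^{M}\beta_\alpha^{-1}=(\delta/\epsilon_0)\beta^{-M}$, which is the first claimed inequality.

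For the second inequality I would convert the lower bound on $M$ into a lower bound on $\beta^M$. Since $\lfloor x\rfloor+1>x$ for every real $x$, the hypothesis $M\ge\lfloor\log_\beta(2\delta/\epsilon_0)\rfloor+1$ gives the strict inequality $M>\log_\beta(2\delta/\epsilon_0)$; because $\beta>1$, the map $t\mapsto\beta^t$ is strictly increasing, so $\beta^M>2\delta/\epsilon_0$, i.e. $\beta^{-M}<\epsilon_0/(2\delta)$. Multiplying through by $\delta/\epsilon_0>0$ gives $(\delta/\epsilon_0)\beta^{-M}<1/2$, which closes the chain of inequalities.

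There is essentially no obstacle here; the only points that need a moment's care are (i) confirming $\beta>1$, since this is what both makes the product $\prod\beta_\alpha^{-1}=\beta^{-M}$ a genuine geometric decay and makes the exponential monotone, and it relies on the assumed strict ordering of the $\epsilon_\alpha$; and (ii) keeping the floor-plus-one estimate strict, so that the conclusion is a strict $<1/2$ rather than $\le 1/2$ — this matters downstream because $\widetilde{w}_{j,k}$ is rounded to the nearest integer to recover $w'_{j,k}$ exactly. I would also note that $p$ does not appear in the statement beyond being positive; it is retained only because, in the algorithm, the quantities $b_{k,\alpha}$ satisfy the hypothesis $\|\epsilon_\alpha w'_{j,k}-b_{k,\alpha}\|_{\mathbb{Z}}<\delta$ with a $\delta$ that depends on $p$ through an estimate of the form (\ref{eqn:dist}), so that Corollary~\ref{cor:iterNum} is what fixes the number of multiscale iterations needed for exact recovery once $p$ is chosen.
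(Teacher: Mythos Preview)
Your argument is correct and matches the paper's approach: the paper's own proof is the single sentence ``This is straightforward corollary of Theorem~\ref{thm:freqEst},'' and you have simply written out that straightforward specialization and the floor-plus-one estimate explicitly. Your added remarks about why $\beta>1$ is needed and why the strict inequality matters for the subsequent rounding step are accurate and helpful elaborations, not deviations.
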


\begin{proof}
This is straightforward corollary of Theorem \ref{thm:freqEst}.
\end{proof}

Corollary \ref{cor:iterNum} looks very similar to Corollary 4.3 in \cite{christlieb2016multiscale}. However, it is different in that the iteration number is increased in order to make the error between $\widetilde{w}_{j,k}$ and $w'_{j,k}$ is less than $1/2$ instead of $p/2$. In \cite{christlieb2016multiscale}, the remainder $m$ modulo $p$ of each energetic $w$ is known by sorting out the $s$ largest DFT components of $p$-length unshifted sample vector so that $p/2$ error bound is enough to guarantee the exact recovery of $w$. On the other hand, in the high-dimensional setting of this paper, the remainders of all entries of $w'_{j,k}$ are not known. Instead, the remainder of $w'_{j,\widetilde{k}}$ is only known where $\widetilde{k}$ is the index of coordinate axis where the frequencies are projected. Thus, we decrease the error further by enlarging the iteration number $M$ and are able to recover the  exact $w'_{j,k}$ by rounding when the error is less $1/2$.

\begin{remark}
Similar to Theorem 4.4 in \cite{christlieb2016multiscale}, the admissible size of $\delta$ can be estimated  as follows
\begin{equation}
\delta=\min\left( \frac{1-\epsilon_0 N'}{2}, \frac{1}{2\beta+2} \right),
\label{eqn:deltaSize}
\end{equation}
under the assumption $\epsilon_0\leq \frac{1-2\delta}{N'}$ of Corollary \ref{cor:iterNum} together with the assumption $\beta\leq \frac{1-2\delta}{2\delta}$ of Theorem \ref{thm:freqEst}.
\end{remark}

\section{Algorithm} \label{sec:alg}
\setcounter{equation}{0}

In this section, we present the overall multiscale high-dimensional sublinear sparse FFT which combines the multiscale method from Section \ref{sec:multiscale} with our previous work in \cite{2016arXiv160607407C}. In addition, we describe how key parameters of the algorithm are chosen. As will be discussed below, the choice of parameters are affected by the noise level, $\sigma$. It is important to know how the frequency entry estimation works in the algorithm and how the collision detection tests are modified  from the tests in \cite{2016arXiv160607407C} in order to make them tolerant of noise. Furthermore, we present the analysis of the average-case runtime and sampling complexity under assumption that the worst case scenario does not happen.

\subsection{Choice of $p$}

In this section, we establish the length of subsampling vector, $p$.
The sample length $p$ affects the total runtime complexity since the discrete Fourier transform is applied to all sample sets taken to recover the frequency entries. Due to this, we want to make it as small as possible. At the same time, however, we can see from  (\ref{eqn:dist}) that the error between the target entry and its approximation becomes smaller if a larger $p$ is taken. Thus, as discussed in Section \ref{sec:multiscale}, if $p$ is large enough, then the {\em rounding method} instead of {\em multiscale method} can recover the exact frequency by rounding (\ref{def:noisyFreq}) to the nearest integer of the form $pv+ m$ with an integer $v$. In this case, $p$ is large enough to diminish the influence of $\sigma$, and therefore if $\sigma$ is large, so is $p$. Instead, the {\em multiscale method} makes it possible to enlarge $p$ moderately. From Theorem \ref{thm:freqEst}, we get 
\begin{equation}
|w^{\alpha+1}_{j,k}-w'_{j,k}| < \frac{\delta}{\epsilon_{\alpha+1}}
\label{eqn:error1}
\end{equation} 
and (\ref{eqn:errorEst}) implies
\begin{equation}
|w^{\alpha+1}_{j,k}-w'_{j,k}| \leq \mathcal{O}\left(\frac{\sigma}{2 \pi\epsilon_{\alpha}~ |a_{\min}| \sqrt{p}}\right).
\label{eqn:error2}
\end{equation}
By putting the right side of (\ref{eqn:error2}) as $c_{\sigma} \frac{\sigma}{\epsilon_{\alpha}~ |a_{\min}| \sqrt{p}}$ with some constant $c_{\sigma}$ and equating both right sides of (\ref{eqn:error1}) and (\ref{eqn:error2}), $\beta:=\epsilon_{\alpha+1}/\epsilon_{\alpha}$ can be estimated as
\begin{equation}
\beta= \frac{2\pi \delta \sqrt{p}}{a_{\min}c_{\sigma}\sigma}.
\label{eqn:beta}
\end{equation}
$\beta$ determines the choice of $\epsilon_{\alpha}$ for each iteration, i.e., $\epsilon_{\alpha}=\beta^{\alpha}\epsilon_0$ where $\epsilon_0$ is chosen to be less than $1/ N'$.
Combining (\ref{eqn:deltaSize}) and (\ref{eqn:beta}), the sample length $p$ can be calculated as
\begin{equation*}
p=\left( \frac{ \beta(\beta+1)a_{\min}c_{\sigma}\sigma}{\pi} \right)^2
\end{equation*}
when $\epsilon_0=\frac{1}{2N'}$ and $\beta>1$, which implies $\delta=\frac{1}{2\beta+2}$. Eventually, the sample length $p$ for the multiscale method needs to satisfy
\begin{equation}
p>\max\left\{c_1s, \left(\frac{\beta(\beta+1)a_{\min}c_{\sigma}\sigma}{\pi} \right)^2\right\},
\label{eqn:pEst}
\end{equation}
where $p>c_1s$ ensures the sample is long enough so that the $90\%$ of all energetic frequencies are not collided modulo $p$ on average which comes from the pigeonhole argument in \cite{lawlor2013adaptive}. 

\alglanguage{pseudocode}
\begin{algorithm}[H]
	\footnotesize
	\caption{Multiscale High-dimensional Sparse Fourier Algorithm Pseudo Code}
	\label{Algorithm1}
	\begin{algorithmic}[1]
		\Statex {\textbf{Input:}}{$f, g, s, N, d, N', d', \sigma, a_{\min}, c_{\sigma}, c_1, \eta, \beta$}
		\Statex {\textbf{Output:}}{$R$}
		\State $R \gets \emptyset$, $i \gets 0$
		\While { $|R|<s$}
		\State $s^{\ast} \gets s - |R|$
		\State $p \gets $ first prime number $\geq \max \left\{c_1 s^{\ast}, \left(\beta(\beta+1)a_{\min}c_{\sigma}\sigma/\pi\right)^2\right\}$
		\State $\tau \gets \frac{c_{\sigma}\sigma}{a_{\min}\sqrt{p}}$, $M \gets 1+ \lfloor \log_{\beta} N' \rfloor $
		\State $ \widetilde{k} \gets (i \bmod ~d')+1$
		\State $q\left({\mathbf{t}}\right) \gets \sum_{(\vect{w}',a_{\vect{w}'})\in R} a_{\vect{w}'} e^{2 \pi i \vect{w}' \cdot {\mathbf t}}$
		\For {$\ell = 0 \to p-1$} 
		\State ${r}^{\widetilde{k}}_{p}[\ell] \gets f\left(g\left(\frac{\ell}{p}{\vect{e}}_{\widetilde{k}} \right)\right)+n_{\ell} - q\left(\frac{\ell}{p}{\vect{e}}_{\widetilde{k}}\right)$ 
		\EndFor
		\State $\mathcal{F}\left(\vect{r}^{\widetilde{k}}_{p}\right) \gets { FFT}\left(\vect{r}^{\widetilde{k}}_{p}\right)$, $\mathcal{F}^{sort}\left(\vect{r}^{\widetilde{k}}_{p}\right) \gets SORT\left(\mathcal{F}\left(\vect{r}^{\widetilde{k}}_{p}\right)\right)$
		\State $vote \gets 0$
		\For {$\alpha = 0 \to M$}
		\State $\epsilon_{\alpha} \gets \frac{\beta^{\alpha}}{2N'}$
		\For {$k = 1 \to d'$}
		\For {$\ell = 0 \to p-1$}
		\State ${r}^{\widetilde{k};k}_{p;\epsilon_{\alpha}}[\ell] \gets f\left(g\left(\frac{\ell}{p}{\vect{e}}_{\widetilde{k}}+\epsilon_{\alpha}{\vect{e}}_{k} \right)\right)+n'_{\ell} - q\left(\frac{\ell}{p}{\vect{e}}_{\widetilde{k}}+\epsilon_{\alpha}{\vect{e}}_{k}\right)  $
		\EndFor
		\State $\mathcal{F}\left(\vect{r}^{\widetilde{k};k}_{p;\epsilon_{\alpha}}\right) \gets { FFT}\left(\vect{r}^{\widetilde{k};k}_{p;\epsilon_{\alpha}}\right)$, $\mathcal{F}^{sort}\left(\vect{r}^{\widetilde{k};k}_{p;\epsilon_{\alpha}}\right) \gets SORT\left(\mathcal{F}\left(\vect{r}^{\widetilde{k};k}_{p;\epsilon_{\alpha}}\right)\right)$
		\EndFor
		\For {$\ell =0 \to s^*-1$}
		
		\If {$\left|\frac{\left|\mathcal{F}^{sort}({r}^{\widetilde{k}}_{p})[\ell]\right|}{\left|\mathcal{F}^{sort}({r}^{\widetilde{k};k}_{p, \epsilon_{\alpha}})[\ell]\right|}-1 \right|>\tau$ for any $k=1,2,\cdots, d'$} {$vote \gets vote +1$} 
		\EndIf
		\For {$k = 1 \to d'$}
		\State $b_{k} \gets \frac{1}{2\pi}{\rm Arg}\left(\frac{\mathcal{F}^{sort}\left({r}^{\widetilde{k};k}_{p, \epsilon_{\alpha}}\right)[\ell]}{\mathcal{F}^{sort}\left({r}^{\widetilde{k}}_{p}\right)[\ell]}\right)$
		\If {$\alpha==0$}
		\State $w'_{k}\gets b_{k}/\epsilon_{\alpha}$
		\Else
		\State $w'_{k}\gets w'_{k}+\left(\left(b_{k}-\epsilon_{\alpha}w'_{k}\right)\left(\bmod~[-1/2,1/2)\right)\right)/\epsilon_{\alpha}$
		\EndIf 
		\If {$\alpha==M$}
		\State $w'_{k}\gets round(w'_{k})$
		\EndIf
		\EndFor
		\If {$vote\leq\eta(M+1)$}
		\State $a_{\vect{w}'}\gets\frac{1}{p}\mathcal{F}^{sort}\left(\vect{r}^{\widetilde{k}}_{p}\right)[\ell]$, $R \gets R\cup \left( {\vect{w}',a_{\vect{w}'}}\right)$
		\EndIf
		\EndFor
		\EndFor
		\State $i \gets i+1$
		\EndWhile
		\State {inverse-transform each ${{\vect{w}'}}$ in $d'$-D to ${\vect{w}}$ $d$-D} and restore it in $R$
	\end{algorithmic}
\end{algorithm}

\subsection{Collision Detection Tests}

As mentioned in Section \ref{sec:notation}, frequencies are recovered only when there are no collisions from the projection and the modulo $p$ division. These conditions are satisfied if 
\begin{equation}
\left|\left|\frac{ \mathcal{F} \left(\mathbf{r}_{p;\epsilon}^{\widetilde{k};k}\right)[m]}{\mathcal{F}\left(\mathbf{r}_{p}^{\widetilde{k}}\right)[m]} \right|-1\right|< \tau,
\label{eqn:pracTests}
\end{equation} 
for $k=1, \cdots, d'$ and some small $\tau>0$ which are the practical tests of (\ref{eqn:tests}). In our noisy setting, Equation (\ref{eqn:noisy}) implies that the left hand side of (\ref{eqn:pracTests}) is bounded above by $\mathcal{O}(\frac{\sigma}{a_{\min}\sqrt{p}})$. Thus, we set our threshold $\tau$ as a constant multiple of $\frac{\sigma}{a_{\min}\sqrt{p}}$. Moreover, since we iteratively update the estimates $w^{\alpha}_{j,k}$ for $\alpha=0,1,\cdots,M$, we reject the estimate after $M$ iterations if the tests fail for more than $\eta(M+1)$ times for each $k$-th entry with $k=1,2,\cdots, d'$ where $\eta$ is a fraction$<1$. Numerical experiments indicate $\eta=\frac{1}{4}$ is a good number.

\subsection{Number of Iterations}
\label{sec:numIter}

In this section, we give a specific choice for the number of iterations in our multiscale algorithms. From Corollary \ref{cor:iterNum}, $M \geq \left\lfloor \log_{\beta}\frac{2\delta}{\epsilon_0} \right\rfloor+1$ guarantees we get the approximation error, $\left| \widetilde{w}_{j,k}-w'_{j,k} \right| <\frac{1}{2}$ which is required to recover the exact $w'_{j,k}$ by rounding $\widetilde{w}_{j,k}$ to the nearest integer. With our choice of $\epsilon=\frac{1}{2N'}$ and the fact that $\delta<1$, $M=\left\lfloor \log_{\beta}  N' \right\rfloor+1$ suffices to satisfy the $1/2$ error bound. For example, if each $\vect{w}_j \in  \left[-\frac{N}{2}, \frac{N}{2} \right)^d \cap \mathbb{Z}^d$ is partially unwrapped to some value in $ \left[-\frac{N^{d_1}}{2}, \frac{N^{d_1}}{2} \right)^{d_2} \cap \mathbb{Z}^{d_2}$ where $d_1$ and $d_2$ are positive integers satisfying $d=d_1d_2$, then $d_2$ entries of each energetic frequency are recovered element-wisely after $M=\mathcal{O}(d_1\log N)$ iterations.

\subsection{Description of Our Pseudocode}

In this section, we explain the multiscale high-dimensional sparse Fourier transform whose pseudocode is provided in Algorithm \ref{Algorithm1}. The set $R$ contains the identified Fourier frequencies and their corresponding coefficients, and it is an empty set initially. Parameter $i$ is the counting number determining the index $\widetilde{k}$ of the coordinate axes where the frequencies are projected in line 6. Parameters $p, \tau$ and $M$ are determined as discussed in the previous sections. Function $q$ in line 7 is a function constructed from the previously found Fourier modes which is used in updating our samples in lines 9 and 17. In line 9, the unshifted samples $\vect{r}^{\widetilde{k}}_p$ corrupted by random Gaussian noise $n_{\ell}$ are taken and in line 11, DFT is applied to these samples and the transformed vector is sorted in the descending order of magnitude. Only its $s^{\ast}$ largest components are taken into account under the $s^{\ast}$-sparsity assumption. In the loop from line 12 through 39, the entries of frequencies corresponding to these $s^{\ast}$ components are estimated iteratively. The shift size $\epsilon_{\alpha}$ is updated in line 14 and we get the $d'$ number of length $p$ samples at the points shifted by $\epsilon_{\alpha}$ along each axis. Each length-$p$ sample will be used to approximate each entry. Similar to line 11, DFT is applied to each length-$p$ sample and the transformed  vector is sorted again following the index order of the sorted DFT of unshifted samples in line 19. In line 22, we check whether the $d'$ tests are passed at the same time or not. If not, the $vote$ is increased by 1.  From lines 24 through 34, the estimate $w_{j,k}'$ for $k$th entry is updated. Except when $\alpha=0$, $w_{j,k}'$ is improved by adding the correction term shown in line 29 in each iteration. In the last iteration when $\alpha=M$, $w_{j,k}'$ is rounded to the nearest integer in order to recover the exact entry, as guaranteed by Corollary \ref{cor:iterNum}. Whether this estimate is stored in the set $R$ or not is determined by checking if the $vote$ after $M$ iterations is less than $\eta(M+1)$. If it is less, this implies that the failure rate of the collision detection tests is less than $\eta$. Accordingly, we estimate $a_{\vect{w}'_j}$ from the DFT of unshifted samples and store $(\vect{w}'_j, a_{\vect{w}'_j})$ in $R$. The entire while loop repeats until $s$ energetic Fourier modes are all found switching the projection coordinate. Once we find all $s$ frequency vectors, each $\vect{w}'_j\in R$ is transformed to $d$-dimensional $\vect{w}_j=g^{-1}(\vect{w}'_j)$.

\subsection{Runtime and Sampling Complexity for the Average Case Signals Under No Worst-case Scenario Assumption}

In this section, we explain the performance of the multiscale high-dimensional sparse Fourier algorithms. In particular, we will restrict ourselves to the average-case analysis under the assumption that the wort-case scenario does not happen.

\begin{mytheorem}
	Let $f^z(\vect{x}) =f(\vect{x}) +z(\vect{x})$, where $\hat{f}(\vect{w})$ is $s$-sparse with each frequency $\vect{w}_j \in [- N/2, N/2)^d\cap \mathbb{Z}^d$ corresponding to the nonzero Fourier coefficient for $j\in \{ 1,2,\cdots,s \}$ and not forming any worst case scenario, and $z$ is complex i.i.d. Gaussian noise of variance $\sigma^2$. Moreover, suppose that $s>C(\beta(\beta+1)c_{\min}\sigma)^2$ for some constant $C$. Algorithm \ref{Algorithm1}, given $N, d, s, \beta$ with $N>5s$ and access to $f^z(\vect{x})$ returns a list of $s$ pairs $(\hat{\vect{w}}, a_{\hat{\vect{w}}})$ such that (i) each $\hat{\vect{w}}=\vect{w}_j$ for some $j\in \{ 1,2,\cdots,s \} $ and (ii) for each $\hat{\vect{w}}$,   $|a_{\vect{w}_j} - a_{\hat{\vect{w}}}| \leq C\sigma/ \sqrt{{s}}$.
	The average-case runtime and sampling complexity are
	\[
	\Theta(sd \log{s} \log{N})\qquad \text{ and } \qquad \Theta(s d\log{N}),
	\]
	respectively, over the class of random signals. 
\end{mytheorem}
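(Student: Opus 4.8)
I would establish the claim in three stages — exact recovery of each energetic frequency, correctness of the acceptance rule and termination, and the complexity count — after first fixing the parameters as in Algorithm \ref{Algorithm1}: $\epsilon_0=1/(2N')$, $\epsilon_\alpha=\beta^\alpha\epsilon_0$, $\delta=1/(2\beta+2)$, $M=1+\lfloor\log_\beta N'\rfloor$, and (line 4) $p$ the first prime $\ge\max\{c_1s^\ast,(\beta(\beta+1)|a_{\min}|c_\sigma\sigma/\pi)^2\}$. The first things to record are: (using $\beta>1$) these choices satisfy $\delta<1/4$, $\epsilon_0\le(1-2\delta)/N'$ and $\beta=(1-2\delta)/(2\delta)$, so the hypotheses of Theorem \ref{thm:freqEst} and Corollary \ref{cor:iterNum} are in force; and the hypothesis $s>C(\beta(\beta+1)c_{\min}\sigma)^2$ makes the $c_1s^\ast$ term dominate in line 4, so $p=\Theta(s^\ast)$ — in particular $p=\Theta(s)$ on the first pass — while (\ref{eqn:pEst}) still keeps $p$ large enough for the noise estimates of Section \ref{noise}.

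\textbf{Stage 1: exact frequency recovery.} Fix a not-yet-found energetic frequency $\vect w'_j$ that, on the current projection axis $\widetilde k$, has no collision from projection and no collision modulo $p$, and fix an entry index $k$ and a scale $\alpha$. Since DFT peak magnitudes survive noise, the sorted bin selected in lines 11/19 is the one with $w'_{j,\widetilde k}\equiv m\ (\bmod\ p)$, and the quantity $b_k$ in line 25 is $\tfrac1{2\pi}\mathrm{Arg}\big(\mathcal F(\mathbf r^{\widetilde k;k}_{p;\epsilon_\alpha})[m]/\mathcal F(\mathbf r^{\widetilde k}_p)[m]\big)$. I would combine (\ref{eqn:noisy}) with the Lee-norm inequality (\ref{eqn:leenorm}) — exactly as in the derivation of (\ref{eqn:dist}) — to get $\|\epsilon_\alpha w'_{j,k}-b_{k,\alpha}\|_{\mathbb Z}\le\mathcal O(\sigma/(|a_{\min}|\sqrt p))<\delta$, the last step being the calibration that (\ref{eqn:beta})--(\ref{eqn:pEst}) was built around (with $c_\sigma$ absorbing the implied constant). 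Then $\{b_{k,\alpha}\}_{\alpha=0}^M$ meets the hypothesis of Theorem \ref{thm:freqEst}, the updates in lines 27 and 29 are verbatim the recursion of Section \ref{sec:multiscale}, and Corollary \ref{cor:iterNum} (with $M\ge1+\lfloor\log_\beta(2\delta/\epsilon_0)\rfloor$) gives $|w^M_{j,k}-w'_{j,k}|<1/2$; since $w'_{j,k}\in\mathbb Z$, the rounding in line 32 returns it exactly. Doing this for $k=1,\dots,d'$ recovers $\vect w'_j$, and the inverse transform $g^{-1}$ in line 42 returns $\vect w_j$.

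\textbf{Stage 2: the acceptance rule and termination.} For an entry with no collision of either kind, (\ref{eqn:noisy}) gives $\big|\,|\mathcal F(\mathbf r^{\widetilde k;k}_{p;\epsilon_\alpha})[m]|/|\mathcal F(\mathbf r^{\widetilde k}_p)[m]|-1\,\big|=\mathcal O(\sigma/(|a_{\min}|\sqrt p))<\tau$ with $\tau$ the line-5 threshold (on the high-probability event, from Section \ref{noise}, that the Gaussian perturbations of the two DFT coefficients are $\mathcal O(\sigma\sqrt p)$), so at every scale the test in line 22 detects no collision; across the $M+1$ scales the accumulated failure count stays $\le\eta(M+1)$, the condition in line 35 holds, and line 36 stores the frequency recovered in Stage 1. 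If instead the entry collides, the noiseless ratio in (\ref{eqn:tests}) is bounded away from $1$ by a positive constant $\gg\tau$, so after the $\mathcal O(\tau)$ perturbation the test fails at essentially every scale, the count exceeds $\eta(M+1)$, and nothing spurious enters $R$ — this establishes statement (i). For termination: by the pigeonhole bound of \cite{lawlor2013adaptive}, $p>c_1s^\ast$ forces a constant fraction of the $s^\ast$ remaining frequencies to have a unique residue modulo $p$ on average, and the no-worst-case hypothesis guarantees that, as $\widetilde k$ cycles through $1,\dots,d'$, every remaining frequency is eventually projected collision-freely; hence each is found and subtracted through $q$ (lines 9, 17) after a constant number of passes, $s^\ast$ shrinks geometrically, and the loop halts with $|R|=s$. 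The $\mathcal O(\sigma/\sqrt p)$ error in each subtracted coefficient only inflates the effective noise by a constant.

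\textbf{Stage 3: coefficients, complexity, and the hard part.} For an accepted $\vect w'_j$, line 36 sets $a_{\vect w'}=\tfrac1p\mathcal F^{sort}(\mathbf r^{\widetilde k}_p)[\ell]$, so (\ref{eqn:coeffEst}) and $p=\Theta(s)$ give $|a_{\vect w_j}-a_{\vect w'}|=\mathcal O(\sigma/\sqrt p)=\mathcal O(\sigma/\sqrt s)$, which is (ii). For the cost, each pass does one length-$p$ FFT for the unshifted block, $(M+1)d'$ length-$p$ FFTs for the shifted blocks, an inverse FFT per block to evaluate $q$ at the sample points, and $\mathcal O((M+1)d's^\ast)$ scalar work, i.e.\ $\Theta((M+1)d's^\ast\log s^\ast)$ since $p=\Theta(s^\ast)$; summing over the $\mathcal O(\log s)$ passes, during which $s^\ast$ and hence $p$ decay geometrically (and, once $p$ reaches its noise floor, the residual frequencies are so sparse relative to $p$ that they clear in $\mathcal O(1)$ further passes), these costs telescope and are dominated by the first pass. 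With $M=\Theta(\log_\beta N')=\Theta(\log N')$ and, in the standard partially-unwrapped setting, $N'=N^{d_1}$, $d'=d_2$, $d=d_1d_2$ so $(M+1)d'=\Theta(d\log N)$, this yields runtime $\Theta(sd\log s\log N)$ and sampling $\Theta(sd\log N)$; both are tight because the first pass alone ($s^\ast=s$, $p=\Theta(s)$, $(M+1)d'=\Theta(d\log N)$ FFTs of length $\Theta(s)$) already costs this much, and $N>5s$ is what keeps $p=\Theta(s)$ below $N'$ as the modulo-$p$ balls-in-bins count requires. The main obstacle is the average-case probabilistic bookkeeping: the Gaussian noise is unbounded, so each ``$<\tau$''/``$<\delta$''/``count $\le\eta(M+1)$'' must be read on a high-probability event, and one must check that a union bound over the $\mathcal O(sd\log N)$ DFT coefficients appearing across all passes — together with the frequency-location randomness driving the pigeonhole and no-worst-case steps — still leaves $o(1)$ failure probability; this is exactly where $p=\Theta(s)$ is essential, controlling both the per-test error and the number of tests. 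A secondary point is bounding the residual of the imperfectly subtracted, already-found modes so that its contribution to any single length-$p$ bin stays below $\tau$; the remaining ingredients (Corollary \ref{cor:iterNum}, the pigeonhole count of \cite{lawlor2013adaptive}, and the structure of the transformations $g$) are imported essentially verbatim from \cite{christlieb2016multiscale, lawlor2013adaptive, 2016arXiv160607407C}.
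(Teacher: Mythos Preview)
Your proposal is correct and follows essentially the same approach as the paper: invoke Corollary~\ref{cor:iterNum} for exact frequency recovery, equation~(\ref{eqn:coeffEst}) for the coefficient error bound, and obtain the complexities by multiplying the $\Theta(sd\log s)$/$\Theta(sd)$ average-case costs of the noiseless algorithm in~\cite{2016arXiv160607407C} by the factor $M=\Theta(\log N')$ from the multiscale loop. The paper's own proof is a three-sentence sketch that cites exactly these ingredients, whereas you have unpacked the collision-test acceptance, the geometric decay of $s^\ast$, and the probabilistic union-bound caveats in considerably more detail than the authors do.
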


\begin{proof}
	The difference of Algorithm \ref{Algorithm1} in this paper from Algorithm 2 in \cite{2016arXiv160607407C} appears in lines 13 through 39. Algorithm \ref{Algorithm1} has the multiscale frequency entry estimation. Thus, the average-case runtime and sampling complexity of Algorithm 2 from \cite{2016arXiv160607407C} is increased by a factor of $M$ which is the number of the repetition in the multiscale frequency entry estimation from Section \ref{sec:numIter}. Corollary \ref{cor:iterNum} ensures that the returned frequency vectors $\hat{\vect{w}}$ are correct, and the coefficient $a_{\hat{\vect{w}}}$ has the desired error bound from (\ref{eqn:coeffEst}).
\end{proof}

\section{Empirical Evaluation} \label{sec:eval}
\setcounter{equation}{0}

In this section, we show the empirical evaluation of the multiscale high-dimensional sparse Fourier algorithm. The empirical evaluation was done for test functions $f(\vect{x})$ which consist of $a_j$ randomly chosen from a unit circle in $\mathbb{C}$ and $\vect{w}_j$ randomly chosen from $[-N/2, N/2)^d \cap \mathbb{Z}^d$. Sparsity $s$ varied from $1$ to $2^{10}=1024$ by factor of 2.  The noise term added to each sample of $f$ came from the Gaussian distribution $\mathcal{N}(0, \sigma^2)$. Standard deviation $\sigma$ varied from $0.001$ to $0.512$ by factor of 2. The dimension $d$ was chosen to be $100$ and $1000$, and $N$ was chosen as $20$. The transformation $g$ was the one for partial unwrapping that was used in \cite{2016arXiv160607407C}, i.e., every $5$-dimensional subvector of each frequency vector was unwrapped to a one-dimensional vector and therefore each $100$ and $1000$-dimensional function $f$ was unwrapped to $20$ or $200$-dimensional function $f\circ g$ whose Fourier domain is  $[-20^5/2, 20^5/2)^{20} \cap \mathbb{Z}^{20}$ or  $[-20^5/2, 20^5/2)^{200} \cap \mathbb{Z}^{200}$, respectively. The input parameters $c_1=2$, $c_{\sigma}=6$, $\eta=1/4$ and $\beta=2.5$ were empirically chosen to balance the runtime and accuracy as in \cite{christlieb2016multiscale}. The initial shift size $\epsilon_0$ was set to $\frac{1}{2\cdot20^5}$.  All experiments are performed in MATLAB. 

The three plots in Figure \ref{fig:noise} show the average over 10 trials of the $\ell_1$ error, the number of samples, and the runtime in seconds as the noise level $\sigma$ changes. These values are in logarithm in the plots. Dimension $d$ and sparsity $s$ are fixed with $100$ and $256$, respectively. On the other hand, the other three plots in Figure \ref{fig:sparsity} show the average over 10 trials of the $\ell_1$ error, the number of samples, and the runtime in seconds as the sparsity $s$ changes when $d=100$ and $1000$. These values are in logarithm in the plots, and the noise level is fixed to $0.512$.

\begin{figure}
\centering

\begin{subfigure}[t]{0.9\textwidth}
\centering
  \includegraphics[width=0.90\textwidth, angle=0]{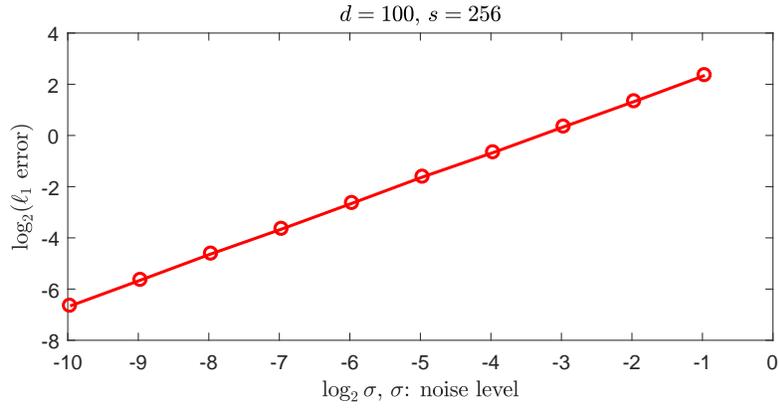}
\vspace{-0.15em}\caption{} \label{fig:error2}
\end{subfigure}

\begin{subfigure}[t]{0.9\textwidth}
\centering
  \includegraphics[width=0.90\textwidth, angle=0]{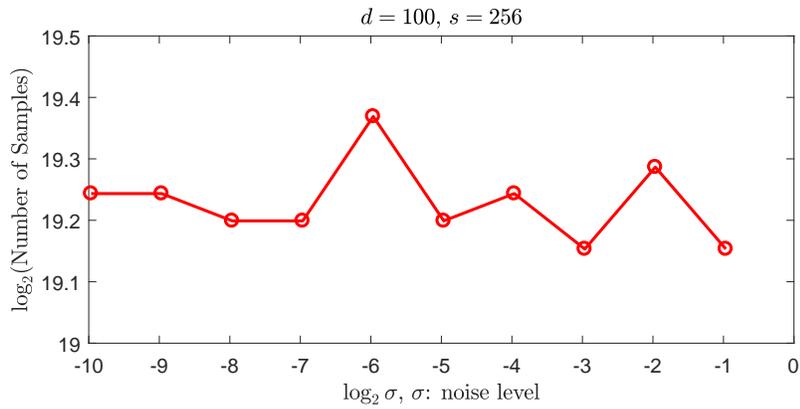}
\vspace{-0.15em}\caption{} \label{fig:sample2}
\end{subfigure}

\begin{subfigure}[t]{0.9\textwidth}
\centering
  \includegraphics[width=0.90\textwidth, angle=0]{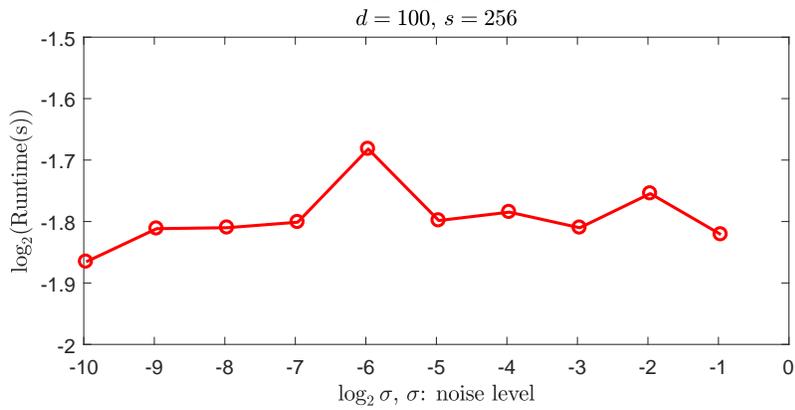}
\vspace{-0.15em}\caption{} \label{fig:time2}
 \end{subfigure}
 
 \caption{(a)Average $\ell_1$ error vs. noise level $\sigma$ in logarithm. (b)Average samples vs. noise level in logarithm. (c)Average runtime vs. noise level in logarithm.}
 \label{fig:noise}
\end{figure}

\begin{figure}
\centering

\begin{subfigure}[t]{0.9\textwidth}
\centering
  \includegraphics[width=0.90\textwidth, angle=0]{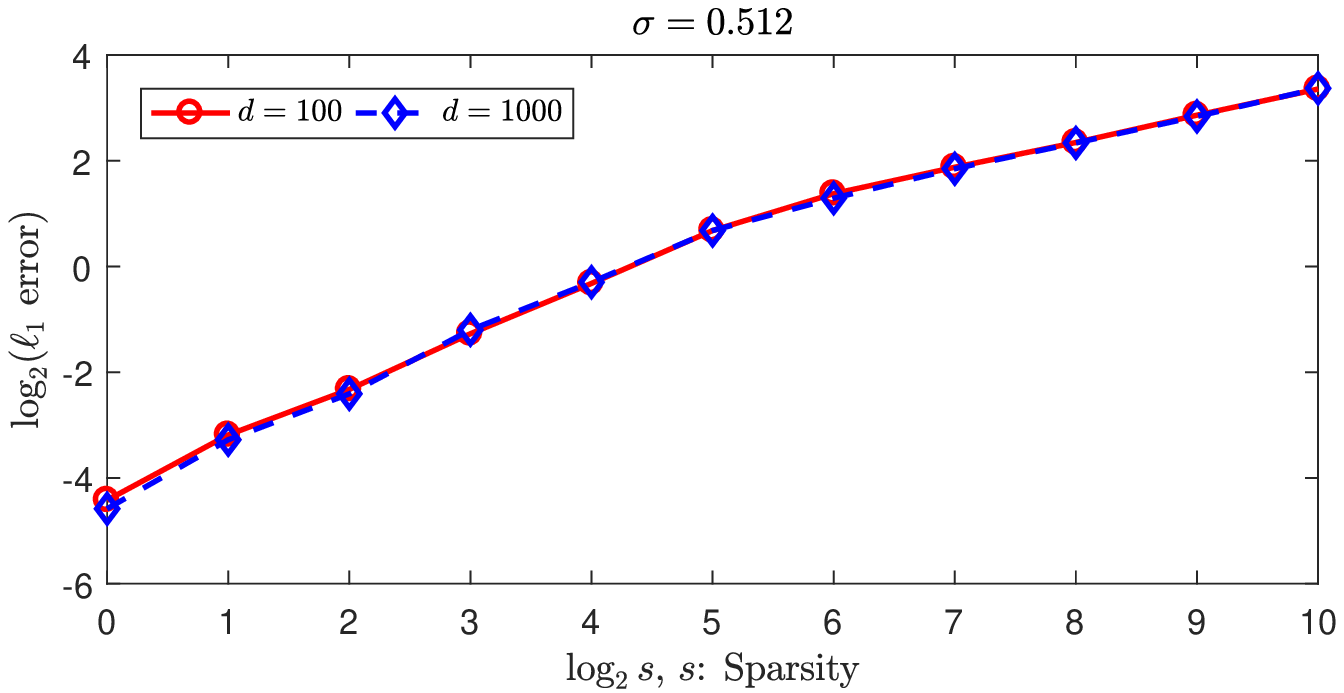}
\vspace{-0.15em}\caption{} \label{fig:error1}
\end{subfigure}

\begin{subfigure}[t]{0.9\textwidth}
\centering
  \includegraphics[width=0.90\textwidth, angle=0]{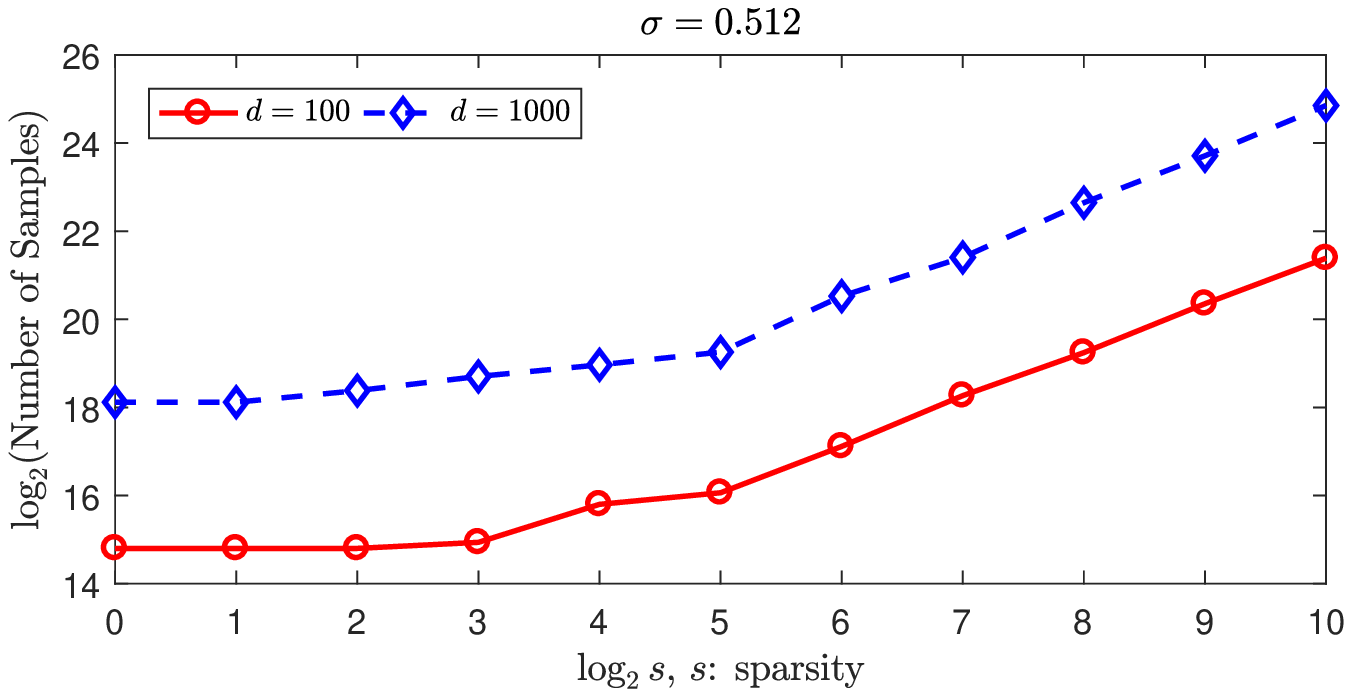}
\vspace{-0.15em}\caption{} \label{fig:sample1}
\end{subfigure}

\begin{subfigure}[t]{0.9\textwidth}
\centering
  \includegraphics[width=0.90\textwidth, angle=0]{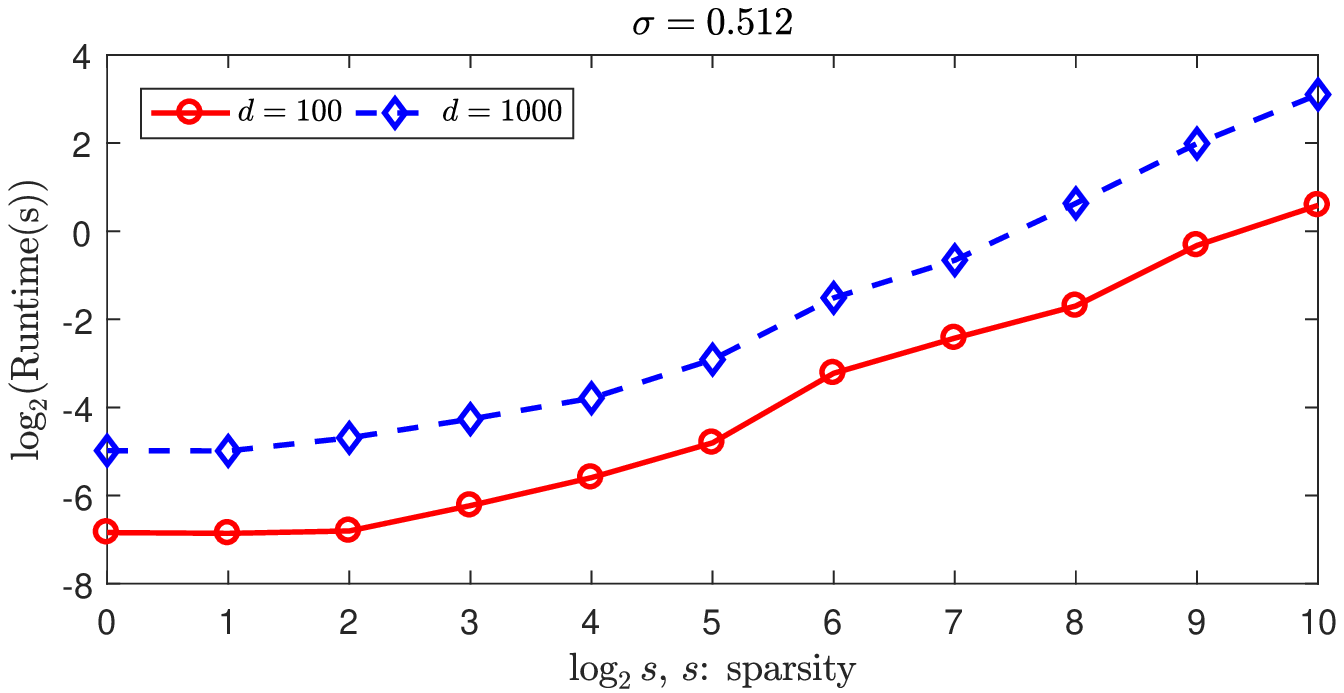}
\vspace{-0.15em}\caption{} \label{fig:time1}
 \end{subfigure}
 
 \caption{(a)Average $\ell_1$ error vs. sparsity $s$ in logarithm. (b)Average samples vs. sparsity $s$ in logarithm. (c)Average runtime vs. sparsity $s$ in logarithm.}
 \label{fig:sparsity}
\end{figure}

\subsection{Accuracy}

In this section, we do numerical experiments to investigate the accuracy of the algorithm.
In Figure \ref{fig:error2} and Figure \ref{fig:error1}, the $\ell_1$ errors of Fourier coefficient vectors are given under various parameter changes. Throughout all trials conducted in these experiments frequencies were always recovered exactly even for the noise level $\sigma=0.512$ which is relatively large compared to the true coefficients from the unit circle in $\mathbb{C}$. Thus, we can observe the errors only from coefficients whose size is a constant multiple of $\frac{\sigma}{\sqrt{p}}$ from (\ref{eqn:coeffEst}).  Due to the characteristic of the {\em multiscale method} which uses less samples compared to the {\em rounding method}, $\ell_1$ errors are relatively large in nature. From Figure \ref{fig:error2}, $\ell_1$ error looks increasingly linear as $\sigma$ increases, which meets our expectation. In Figure \ref{fig:error1}, the plot does not look exactly linear, but between $\log_2 s=5$ and $6$, there is a transition of slope. This is because the sample length $p$ from (\ref{eqn:pEst}) changes from $c_1s$ to $\left(\frac{\beta(\beta+1)a_{\min}c_{\sigma}\sigma}{\pi} \right)^2 $ during this transition.

\subsection{Sampling complexity}

In this section, we numerically explore the sampling complexity of the algorithm.
Sample numbers along $\sigma$ changes in Figure \ref{fig:sample2} seems irregular at first sight. Looking at the scale of vertical axis, however, we can see that the difference between maximum and minimum is less than $0.3$. Therefore, sampling complexity is not very affected by noise level. In Figure \ref{fig:sample1}, the red graph shows the average sample numbers as the sparsity increases when $d=100$ and the blue graph shows the ones when $d=1000$. Since our multiscale algorithm recovers each frequency entry iteratively using $\log N$ sets of $\mathcal{O}(s)$-length samples, the average-case sampling complexity is indeed $\mathcal{O}(sd\log N)$ when the worst-case scenario does not happen. Two graphs in Figure \ref{fig:sample1} look close to be linear excluding the transition between $\log_2 s=5$ and $6$, which again is caused by the change of $p$ from (\ref{eqn:pEst}). Moreover the difference between the values of the red and blue graphs are close to $3$, which implies that the sampling number depends linearly on $d$. The $d$-dimensional FFT whose sampling complexity is $\mathcal{O}(N^d)$ cannot deal with our high-dimensional problem computationally, whereas our algorithm uses only millions to billions of samples for reconstruction.

\subsection{Runtime complexity} 

In this section, we consider the average-case runtime complexity of our multiscale high-dimensional sparse Fourier transform.
Figures \ref{fig:time2} and \ref{fig:time1} demonstrate the average-case runtime complexity of the algorithm. The time
for evaluating the samples from functions is excluded when measuring the runtime. For the main algorithm, we demonstrated that it is $\mathcal{O}(sd\log s \log N )$ because for each entry recovery, DFT with $\mathcal{O}({s\log s})$ runtime complexity is applied in $d\log N$ iterations. 
In  Figures \ref{fig:time2}, the runtimes in seconds look irregular but the scale of vertical axis is less than $0.3$ so that we can conclude that  similar to sample numbers the runtime is not affected by $\sigma$ very much. Overall, it took less than a second on average. In Figure \ref{fig:time1}, the red graph represents the runtimes as the sparsity changes when $d=100$, and the blue graph represents the ones when $d=1000$. Those graphs do not look linear, but considering the average slope we can see that the runtime is increased by around $2^8$ while $s$ is increased by $2^{10}$. On the other hand, the difference between two graphs implies that the runtime complexity is linear in $d$. Compared to the FFT with runtime complexity of $\mathcal{O}(N^d \log N^d)$ which is impossible to be practical in high-dimensional problem,  our algorithm is quite effective, taking only a few seconds.

\section{Conclusion} \label{sec:con}
\setcounter{equation}{0}

In this paper, we developed a multiscale high-dimensional sparse Fourier algorithm recovering a few energetic Fourier modes using noisy samples. As the estimation error is controlled by the noise level $\sigma$ and the sample length $p$, larger $p$ reduces the error. Rather than recovering the frequencies in a single step, however, we choose multiscale approach in order to make the sample length $p$ increase moderately by improving the estimate iteratively through correction terms determined by a sequence of shifting sizes $\epsilon_{\alpha}$. We showed that a finite number of correction terms are enough to make the error smaller than $1/2$ so that we can reconstruct each integer frequency entry by rounding.  As a result, the algorithm has $\mathcal{O}(sd\log N)$ sampling complexity and $\mathcal{O}(sd\log s \log N)$ runtime complexity on average under the assumption that there is no worst case scenario happening by combining the result from \cite{2016arXiv160607407C} and \cite{christlieb2016multiscale}. 
In the numerical experiment we ran, with a noise of $\sigma=0.512$ we were able to recover 100\% of the frequencies up to dimension 1000. 
The methods introduced either in \cite{2016arXiv160607407C} and in this paper assume that we can get the measurement at any sample point. However, this is not always the case in practice. Our future work will be a modification of the algorithm to make it work for the given discrete signals using the idea of filtering from \cite{merhi2017new}. 

{\bf ACKNOWLEDGEMENTS}
We would like to thank Mark Iwen for his valuable advice. This research is supported in part by AFOSR grants FA9550-11-1-0281, FA9550-12-1-0343 and FA9550-12-1-0455, NSF grant DMS-1115709, and MSU Foundation grant SPG-RG100059, as well as Hong Kong Research Grant Council grants 16306415 and 16317416.

\bibliographystyle{abbrv}
\bibliography{multiscale_high_dimension}

\begin{thebibliography}{10}

\bibitem{allard2012multi}
W.~K. Allard, G.~Chen, and M.~Maggioni.
\newblock Multi-scale geometric methods for data sets ii: Geometric
  multi-resolution analysis.
\newblock {\em Applied and Computational Harmonic Analysis}, 32(3):435--462,
  2012.

\bibitem{candes2005decoding}
E.~J. Candes and T.~Tao.
\newblock Decoding by linear programming.
\newblock {\em IEEE transactions on information theory}, 51(12):4203--4215,
  2005.

\bibitem{2016arXiv160607407C}
B.~{Choi}, A.~{Christlieb}, and Y.~{Wang}.
\newblock {Multi-dimensional Sublinear Sparse Fourier Algorithm}.
\newblock {\em ArXiv e-prints}, June 2016.

\bibitem{christlieb2016multiscale}
A.~Christlieb, D.~Lawlor, and Y.~Wang.
\newblock A multiscale sub-linear time fourier algorithm for noisy data.
\newblock {\em Applied and Computational Harmonic Analysis}, 40(3):553--574,
  2016.

\bibitem{coifman2006diffusion}
R.~R. Coifman and S.~Lafon.
\newblock Diffusion maps.
\newblock {\em Applied and computational harmonic analysis}, 21(1):5--30, 2006.

\bibitem{donoho2006compressed}
D.~L. Donoho.
\newblock Compressed sensing.
\newblock {\em IEEE Transactions on information theory}, 52(4):1289--1306,
  2006.

\bibitem{foucart2013mathematical}
S.~Foucart and H.~Rauhut.
\newblock {\em A mathematical introduction to compressive sensing}.
\newblock Springer, 2013.

\bibitem{ghazi2013sample}
B.~Ghazi, H.~Hassanieh, P.~Indyk, D.~Katabi, E.~Price, and L.~Shi.
\newblock Sample-optimal average-case sparse fourier transform in two
  dimensions.
\newblock In {\em Communication, Control, and Computing (Allerton), 2013 51st
  Annual Allerton Conference on}, pages 1258--1265. IEEE, 2013.

\bibitem{gilbert2002near}
A.~C. Gilbert, S.~Guha, P.~Indyk, S.~Muthukrishnan, and M.~Strauss.
\newblock Near-optimal sparse fourier representations via sampling.
\newblock In {\em Proceedings of the thiry-fourth annual ACM symposium on
  Theory of computing}, pages 152--161. ACM, 2002.

\bibitem{gilbert2005improved}
A.~C. Gilbert, S.~Muthukrishnan, and M.~Strauss.
\newblock Improved time bounds for near-optimal sparse fourier representations.
\newblock In {\em Proceedings of SPIE}, volume 5914, page 59141A, 2005.

\bibitem{hassanieh2012nearly}
H.~Hassanieh, P.~Indyk, D.~Katabi, and E.~Price.
\newblock Nearly optimal sparse fourier transform.
\newblock In {\em Proceedings of the forty-fourth annual ACM symposium on
  Theory of computing}, pages 563--578. ACM, 2012.

\bibitem{hassanieh2012simple}
H.~Hassanieh, P.~Indyk, D.~Katabi, and E.~Price.
\newblock Simple and practical algorithm for sparse fourier transform.
\newblock In {\em Proceedings of the twenty-third annual ACM-SIAM symposium on
  Discrete Algorithms}, pages 1183--1194. Society for Industrial and Applied
  Mathematics, 2012.

\bibitem{iwen2017robust}
M.~Iwen, A.~Viswanathan, and Y.~Wang.
\newblock Robust sparse phase retrieval made easy.
\newblock {\em Applied and Computational Harmonic Analysis}, 42(1):135--142,
  2017.

\bibitem{iwen2010combinatorial}
M.~A. Iwen.
\newblock Combinatorial sublinear-time fourier algorithms.
\newblock {\em Foundations of Computational Mathematics}, 10(3):303--338, 2010.

\bibitem{iwen2013improved}
M.~A. Iwen.
\newblock Improved approximation guarantees for sublinear-time fourier
  algorithms.
\newblock {\em Applied And Computational Harmonic Analysis}, 34(1):57--82,
  2013.

\bibitem{kapralov2016sparse}
M.~Kapralov.
\newblock Sparse fourier transform in any constant dimension with
  nearly-optimal sample complexity in sublinear time.
\newblock {\em arXiv preprint arXiv:1604.00845}, 2016.

\bibitem{lawlor2013adaptive}
D.~Lawlor, Y.~Wang, and A.~Christlieb.
\newblock Adaptive sub-linear time fourier algorithms.
\newblock {\em Advances in Adaptive Data Analysis}, 5(01):1350003, 2013.

\bibitem{merhi2017new}
S.~Merhi, R.~Zhang, M.~A. Iwen, and A.~Christlieb.
\newblock A new class of fully discrete sparse fourier transforms: Faster
  stable implementations with guarantees.
\newblock {\em arXiv preprint arXiv:1706.02740}, 2017.

\bibitem{morotti2016explicit}
L.~Morotti.
\newblock Explicit universal sampling sets in finite vector spaces.
\newblock {\em Applied and Computational Harmonic Analysis}, 2016.

\bibitem{potts2016sparse}
D.~Potts and T.~Volkmer.
\newblock Sparse high-dimensional fft based on rank-1 lattice sampling.
\newblock {\em Applied and Computational Harmonic Analysis}, 41(3):713--748,
  2016.

\end{thebibliography}

\end{document}